\long\def\symbolfootnote[#1]#2{\begingroup%
\def\thefootnote{\fnsymbol{footnote}}\footnote[#1]{#2}\endgroup}
\theoremstyle{plain}
\newtheorem{Theorem}{{Theorem}}[section]
\newtheorem{Lemma}[Theorem]{Lemma}
\newtheorem{Corollary}[Theorem]{Corollary}
\newtheorem{Definition}[Theorem]{Definition}
\title{Nim on hypercubes}
\author{\textbf{Lindsay Erickson,} \\ Concordia College \\ lerick15@cord.edu \\
\textbf{Warren Shreve,} \\ North Dakota State Univeristy \\ warren.shreve@ndsu.edu}
\begin{document}

\maketitle

\section*{Abstract}

The ordinary game of Nim has a long history and is well-known in the area of combinatorial game theory.  The solution to the ordinary game of Nim has been known for many years and lends itself to numerous other solutions to combinatorial games.  Nim was extended to graphs by taking a fixed graph with a playing piece on a given vertex and assigning positive integer weight to the edges that correspond to a pile of stones in the ordinary game of Nim.  Players move alternately from the playing piece across incident edges, removing weight from edges as they move. This paper solves Nim on hypercubes in the unit weight case completely.  We briefly discuss the arbitrary weight case and its ties to known results.

\section{Background}

The graphs we will consider are finite and undirected with no multiple edges or loops.  We will often want to label the vertices and edges.  When we do, the edge between vertex $v_i$ and $v_j$ will be denoted $e_{ij}$.  Additional graph theory terminology, including path, vertex degree, and graph isomorphism, will be assumed as found in \cite{MR2107429}.  When we refer to the length of a cycle or path, we will call it even or odd by the number of edges the cycle or path contains.

\subsection{How to Play}

To play Nim on graphs, two players first agree on a finite, undirected, integrally weighted graph and a fixed starting position.  The position of the game is indicated by a positional piece which we will denote by $\Delta$. The game starts with $P_1$ choosing an edge incident with $\Delta$ to move across.  As a player moves across an edge, the player must lower the weight of the edge by a positive integer amount.  The positional piece $\Delta$ moves with the move of the player so that when a player comes to rest on the other vertex incident with that edge, the next player must start with that vertex and move across edges incident with the new position of $\Delta$.  If either player lowers the weight of an edge to zero, the edge is no longer playable.  For ease of notation, we will delete the edge from the picture of the graph if the weight is decreased to zero (see Figure~\ref{fig:exgame}).  Play continues in this back-and-forth fashion until a player can no longer move since there are no edges incident with $\Delta$.

\begin{figure}[h]
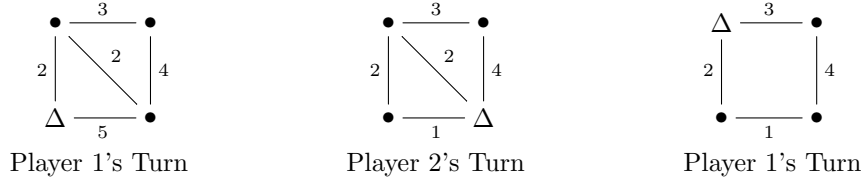


\caption{An example of the first two moves in a game of Nim on graphs.} \label{fig:exgame}

\begin{center}
$
\xygraph{ 
!{(0,0) }*+{\Delta}="a" 
!{(0,1) }*+{\bullet}="b" 
!{(1,1) }*+{\bullet}="c" 
!{(1,0)}*+{\bullet}="d"
"a"-"b"^{2} 
"a"-"d"_{5}
"b"-"c"^{3}
"b"-"d"^{2}
"c"-"d"^{4}
!{(3.5,0) }*+{\bullet}="e" 
!{(3.5,1) }*+{\bullet}="f" 
!{(4.5,1) }*+{\bullet}="g" 
!{(4.5,0)}*+{\Delta}="h"
"e"-"f"^{2} 
"e"-"h"_{1}
"f"-"g"^{3}
"f"-"h"^{2}
"g"-"h"^{4}
!{(7,0) }*+{\bullet}="i" 
!{(7,1) }*+{\Delta}="j" 
!{(8,1) }*+{\bullet}="k" 
!{(8,0)}*+{\bullet}="l"
"i"-"j"^{2} 
"i"-"l"_{1}
"j"-"k"^{3}
"k"-"l"^{4}
} 
$
$\begin{array}{ccccccccccccc}\mbox{Player 1's Turn} &   &  &  &  &  & \mbox{Player 2's Turn} &  &    & &  &  & \mbox{Player 1's Turn}\end{array}$
\end{center}

\end{figure}

\subsection{Nim on Graphs Defintions}

\begin{Definition}
Given a graph $G$ with edge set $E(G)$ and vertex set $V(G)$ we will call the non-negative integer value assigned to each $e \in E(G)$ the \textbf{weight} of the edge and denote the weight of edge $e_{ij}$ by $\omega(e_{ij})$. 
\end{Definition}

When we say a graph has \textit{unit weight}, we precisely mean that $\omega(e) = 1$ for all $e \in E(G)$.  We will often discuss \textit{uniformly weighted} graphs, meaning that $\omega(e) = k$ for all $e \in E(G)$ and for some $k \in \mathbb{Z}^+$.

For any graph $G$ we assume  $\omega(e_{ij}) \neq 0$ for all $e_{ij} \in E(G)$ at the start of a game.  When an edge is such that $\omega(e)=0$ we will delete it from the graph entirely, since it is no longer a playable edge.  Given a game graph $G$ with weight assignment $\omega_G (e)$, denote by $P_1$ the first player to move from the starting vertex, and denote by $P_2$ the player to move after $P_1$.  The indicator piece $\Delta$ denotes the vertex from which a player is to move.  We will always enumerate vertices in such a way that $\Delta$ is on $v_1$ at the start of a game.

\begin{Definition}
For either player and from a given position $\Delta$ on vertex $v_j$, we define the set of vertices to which a player may legally move to from $\Delta$ to be the \textbf{option} of the player.  The set of options of player $i$ at vertex $v_j$ will be denoted by $O(P_i, v_j)$.
\end{Definition}

Certainly for a vertex to exist in the set of options the incident edge must be adjacent to $\Delta$.  Thus $O(P_i, v_j)=\{v_k \in V(G) : \Delta = v_j; \ e_{jk} \in E(G);  \ \omega(e_{jk}) \neq 0\}$.  We will omit $v_j$ when the position of $\Delta$ is apparent.

\begin{Definition}
For either player and from a given position $\Delta$ on vertex $v_j$, we call the decision of how much weight to remove from an edge $e_{ji}$ the \textbf{choice} of the player.
\end{Definition}

Thus for any given option with $\omega(e_{ij}) > 1$, the player has a choice of whether or not to remove all weight, or exactly how much weight to remove.

\begin{Definition}
We will say that a pair of $P_i$'s options are \textbf{isomorphic} if given two options, $v_j, v_k \in O(P_i, v_i)$, there exists a graph isomorphism between $v_j$ and its neighbors and $v_k$ and its neighbors.  We will say that two options are \textbf{identical} if in addition to being isomorphic, the subgraph induced by $v_i$ and each $v_j \in O(P_i, v_i)$ have the same weight assignment.
\end{Definition}

We will use the word \textit{option} exclusively when we are referring to the vertex a player will move to, and the word \textit{choice} to refer to the amount of weight across the option's edge to be removed during play.  Hence during any given move, a player will have the option of which vertex to move to, and the choice of how much weight to remove.

Notice that the definition of isomorphic requires that the vertices in the set of options have the same degree, and that there is a bijection between the options of the vertices within the set of isomorphic options (see Figure~\ref{fig:isomorphicoptions}).  In other words, if for all $v_j, v_k \in O(P_i, v_i)$ we have that $O(P_i, v_j) \cong O(P_i,v_k)$ then the options of $v_i$ are isomorphic.  We will also talk about graphs being isomorphic within the context of Nim on graphs.  This will be necessary to cut down on cases to consider within games.

\vspace{.25in}

\begin{figure}[h]
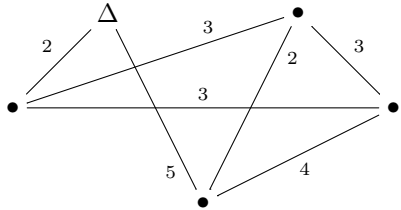


\caption{The options at $\Delta$ are isomorphic but not identical.} \label{fig:isomorphicoptions}

\begin{center}
$
\xygraph{ 
!{(-1,2) }*+{\Delta}="a" 
!{(1,2) }*+{\bullet}="b" 
!{(2,1) }*+{\bullet}="c" 
!{(0,0)}*+{\bullet}="d"
!{(-2,1)}*+{\bullet}="e" 
"a"-"e"_{2} 
"a"-"d"_(.8){5}
"b"-"c"^{3}
"b"-"d"^(.2){2}
"b"-"e"_(.3){3}
"c"-"d"^{4}
"c"-"e"_{3}
}
$
\end{center}

\end{figure}

\section{Nim on the hypercube with unit weight}

\begin{Definition}
The $n$-dimensional hypercube, or the $n$-cube, $Q_n$ is the graph $K_2$ if $n=1$, while for $n \geq 2$, $Q_n$ is defined recursively as $Q_{n-1} \times K_2$ \cite{MR2107429}.
\end{Definition}

We can also think of the $n$-cube as the graph whose vertices are labeled by the binary $n$-tuples $(a_1, a_2, \ldots, a_n)$ where each $a_i$ is either 0 or 1 for $1 \leq i \leq n$ and such that two vertices are adjacent if and only if their corresponding $n$-tuples differ at precisely one coordinate.  This is the view of hypercubes that we will adopt in what follows, along with the following alternate labeling.  Label each vertex $a = (a_1, a_2, \ldots, a_n)$ of the hypercube $Q_n$ by the corresponding set $X_a = \{i : a_i = 1\}$ \cite{MR1367739}.  Then we will draw the $Q_n$ in the plane so that the vertical coordinates of the vertices are in order by the size of the sets labeling them (see Figure~\ref{ex:Q_3}).  We will call this the level labeling scheme and use it throughout the hypercube section.

\begin{figure}[h]
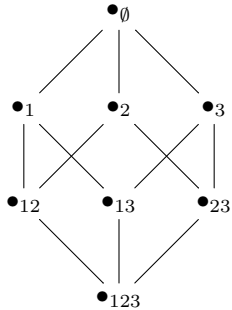

\caption{Here is the $Q_3$ with the level labeling scheme.}\label{ex:Q_3}
\begin{center}
$
\xygraph{
!{(0,0) }*+{\bullet_{123}}="123"
!{(-1,1) }*+{\bullet_{12}}="12"
!{(0,1) }*+{\bullet_{13}}="13"
!{(1,1) }*+{\bullet_{23}}="23"
!{(-1,2) }*+{\bullet_{1}}="1"
!{(0,2) }*+{\bullet_{2}}="2"
!{(1,2) }*+{\bullet_{3}}="3"
!{(0,3) }*+{\bullet_{\emptyset}}="\emptyset"
"123"-"12"
"123"-"13"
"123"-"23"
"12"-"1"
"12"-"2"
"13"-"1"
"13"-"3"
"23"-"2"
"23"-"3"
"1"-"\emptyset"
"2"-"\emptyset"
"3"-"\emptyset"
}
$
\end{center}

\end{figure}

\begin{Definition}

The parity of a vertex in $Q_n$ is the parity of the number of 1's in its name, even or odd \cite{MR1367739}.

\end{Definition}

This implies that each edge of the $Q_n$ has an even vertex and an odd vertex as endpoints (see Figure~\ref{ex:Q_3}).  This means that the even vertices form an independent set, as do the odd vertices.  Hence $Q_n$ is bipartite for any $n$ \cite{MR1367739}.

Since we typically start with $\Delta$ on the lowest numerically denoted vertex.  Here we will start with $\Delta$ on vertex $\emptyset$.  With this level labeling scheme, we can think of the vertices at different levels corresponding to the number of digits in the vertex labels.  Thus in the example of the $Q_3$ we have levels $\emptyset, 1, 2,$ and $3$.


Throughout this section we will assume that the weight of each edge of the hypercube has unit weight.

\begin{Lemma}\label{lem:levels012}
$P_1$ can keep game play on the $Q_{2n+1}$ within the confines of levels $\emptyset, 1,$ and $2$.
\end{Lemma}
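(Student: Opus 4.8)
The plan is to exploit the bipartite level structure of $Q_{2n+1}$ together with a \emph{pairing strategy} for $P_1$. First I would record the parity bookkeeping: since every edge of $Q_n$ joins an even vertex to an odd vertex, the positional piece $\Delta$ alternates between even and odd levels, and because $P_1$ starts on the even vertex $\emptyset$ (level $0$), $P_1$ always moves from an even level and $P_2$ always moves from an odd level. Consequently, so long as play has not left levels $0,1,2$, it is always $P_2$ who sits on level $1$ and always $P_1$ who sits on level $0$ or level $2$. The only way for the piece to escape the set $\{0,1,2\}$ is a move from level $2$ to level $3$, and such a move can only ever be made by $P_1$. Hence it suffices to exhibit a strategy guaranteeing that whenever $P_1$ stands on a level-$2$ vertex, $P_1$ has an unused edge leading \emph{down} to level $1$; $P_1$ then always descends and (together with the forced ascent from $\emptyset$) the piece never reaches level $3$.

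Second, I would set up the pairing. Each level-$2$ vertex $\{i,j\}$ has exactly two down-edges, $e_{ij}$ to $\{i\}$ and to $\{j\}$, and these partition all edges between levels $1$ and $2$ into pairs indexed by the level-$2$ vertices. The rule for $P_1$ at level $2$ is: if $P_2$ has just ascended into $\{i,j\}$ along one of these two edges, $P_1$ descends along its partner. For the single vertex $\emptyset$ I would treat its $2n+1$ incident edges separately: reserve one edge $f_{k_0}$ for $P_1$'s opening move and pair the remaining $2n$ edges arbitrarily into $n$ pairs; the rule for $P_1$ at $\emptyset$ on later visits is again to leave along the partner of whichever edge $P_2$ used to arrive.

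Third --- and this is where the real work lies --- I would prove the invariant that the two edges of any pair are always consumed together, one by $P_2$ (moving up into the level-$2$ vertex, or down into $\emptyset$) and then immediately its partner by $P_1$. The argument goes by induction on the number of moves, using three facts: $P_1$ never touches a paired edge except as the scripted reply to $P_2$ using its partner; $P_1$'s only ``free'' use of such an edge is the opening move along the unpaired $f_{k_0}$, which is instantly spent so that $P_2$ can never traverse it; and within levels $0,1,2$ each player traverses every level-$0$--level-$1$ and level-$1$--level-$2$ edge in only one admissible direction. Granting the invariant up to a given point, when $P_2$ now traverses an edge $e$ of a pair $\{e,e'\}$, the partner $e'$ cannot already be gone, since any prior use of $e'$ would have forced a prior use of $e$, contradicting that $P_2$ is traversing $e$ now; hence $e'$ is available, $P_1$'s reply is legal, and the invariant is preserved while the piece is returned to level $1$. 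The main obstacle is precisely this verification: one must rule out every way an edge could be used ``out of pair,'' which forces careful attention to the directions in which each player may cross the two edge types and to the special unpaired opening edge $f_{k_0}$ that absorbs the odd count $2n+1$ of edges at $\emptyset$.
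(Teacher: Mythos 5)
Your proof is correct, and its core insight is the same one the paper's proof turns on: each level-2 vertex $\{i,j\}$ has exactly two edges to level 1, and the edge $P_2$ has just used to enter $\{i,j\}$ is distinct from the partner edge $P_1$ needs, so $P_1$ can always retreat. But you organize the argument quite differently. The paper argues reactively and by contradiction: supposing $P_1$ is at $\{i,j\}$ with the second up-edge already gone, it splits into cases according to which player consumed that edge earlier, and shows each case forces a prior visit to level 3, which ``contradicts the fact that $P_1$ would not make such a move unless forced to''---an implicit induction on the history of play that is left informal. You make that induction explicit: you fix the pairing of level-1--level-2 edges in advance (one pair per level-2 vertex), give $P_1$ the mirroring rule of always answering along the partner, and prove as an invariant that pairs are consumed atomically, using the one-directionality of each player's traversals that bipartiteness provides. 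This buys rigor exactly where the paper is loosest, at the cost of more machinery. Note, however, that your pairing of the edges at $\emptyset$ is superfluous for this lemma: every move from $\emptyset$ goes to level 1, so nothing $P_1$ does there can push play out of levels $\emptyset,1,2$, and the lemma only requires blocking moves from level 2 to level 3, which (as you observe) only $P_1$ can make. That extra structure is not wasted, though---it shows $P_1$ is never stranded at $\emptyset$, which is essentially what the subsequent theorem (that $P_1$ wins $Q_{2n+1}$) needs; the paper obtains that instead from a separate degree-parity count after ``chopping off'' levels 3 and beyond.
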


\begin{proof}
Let $Q_{2n+1}$ have unit weight and label each vertex by the $X_a$ scheme described above so that $\emptyset$ is at vertex $(0,0, \ldots, 0)$.  Give the $Q_{2n+1}$ the level labeling scheme.  Assume that $\Delta$ starts at vertex $\emptyset$.  Note that since the $Q_{2n+1}$ is regular of order $2n+1$ any choice of starting vertex is isomorphic.

Every hypercube is bipartite.  Thus we can observe that $P_1$'s vertices all have even parity, and $P_2$'s vertices have odd parity according to the labeling scheme.

Suppose $P_1$ is at vertex $ij$ in level 2.  Since we want to show that $P_1$ can opt not to move down to level 3, we will show that there is always an option in level 1 for any $ij$ in level 2.  Since $P_1$ is playing from vertex $ij$, either $P_2$ moved from $i$ or from $j$ in level 1.  Without loss of generality, assume $P_2$ moved from $i$ so that $e_{i,ij}$ is no longer an option for $P_1$.

By way of contradiction, suppose that $P_1$ cannot move to $j$ from $ij$.  This implies that $e_{j,ij}$ has been used already.  This can only occur in one of two ways: the first case is that $P_1$ moved to $j$ via $e_{j,ij}$ on a previous move, and the second case is that P$_2$ moved from $j$ to $ij$ via $e_{j,ij}$ on a previous move.  

In the first case, if $P_1$ moved from $ij$ to $j$ then it must be the case that $P_2$ was on level three and moved from some $ijk$ to $ij$.  This is because we are assuming that just now $P_2$ moved from $i$ to $ij$ and thus could not have made that move previously. (Recall that since we have unit weight, once an edge has been moved across once it is no longer a playable edge.)  This contradicts the fact that $P_1$ would not make such a move unless forced to.  Clearly $P_1$ was not forced to previously move down to level 3 since it is only now that a move to vertex $i$ is no longer possible.

In the second case, if $P_2$ moved from $j$ to $ij$ but $P_1$ did not move from $ij$ to $i$ since it remains, then $P_1$ moved down to some $ijk$, again a contradiction.

Thus $P_1$ always has a level 1 option and hence can keep $P_2$ within levels $\emptyset, 1,$ and $2$.
\end{proof}


\begin{Theorem}\label{Thm:P1winsQ_{2n+1}}
Assume $\omega(e) = 1$ for all $e \in Q_{2n+1}$.  Then $P_1$ can win the $Q_{2n+1}$ for any $n \geq 1$.
\end{Theorem}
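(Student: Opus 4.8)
The plan is to combine Lemma~\ref{lem:levels012} with a parity count at the starting vertex. By the Lemma, $P_1$ has a strategy that confines all play to levels $\emptyset$, $1$, and $2$; I will argue that under any such strategy $P_1$ can never be the player left without a move. Since each edge has unit weight and is consumed when crossed, the game is finite, so it must terminate with some player unable to move and hence losing. Because $Q_{2n+1}$ is bipartite and $P_1$ always moves from an even-parity vertex to an odd-parity vertex, the token rests on an even vertex exactly when it is $P_1$'s turn; thus $P_1$ can only ever be stranded at an even vertex, which under the confinement means either $\emptyset$ (level $0$) or a pair $\{i,j\}$ (level $2$). It therefore suffices to show that $P_1$ always has an available move from each of these two kinds of vertices.

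The level-$2$ case is already supplied by the proof of Lemma~\ref{lem:levels012}: from any pair $\{i,j\}$ that $P_1$ reaches, at least one of the two downward edges $e_{i,ij}$, $e_{j,ij}$ is still unused, so $P_1$ can always step back down to a singleton in level $1$. Hence the only even vertex at which $P_1$ might conceivably be stuck is $\emptyset$ itself, and the whole argument reduces to showing $P_1$ can always move whenever $\Delta$ returns to $\emptyset$ on $P_1$'s turn.

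For $\emptyset$ I will use a parity argument that exploits the oddness of $\deg(\emptyset) = 2n+1$. The key observation is that whenever it is $P_1$'s turn with $\Delta$ on $\emptyset$, an even number of the $2n+1$ edges incident to $\emptyset$ have been consumed: since the token begins on $\emptyset$, each move that crosses an $\emptyset$-incident edge either carries the token off $\emptyset$ or back onto it, these two types of crossings strictly alternate, and the token presently resting on $\emptyset$ forces the two counts to be equal. As $2n+1$ is odd, at least one $\emptyset$-incident edge always remains, so $P_1$ can move. Combining both cases, $P_1$ never runs out of moves; the finite game must therefore end with $P_2$ unable to move, giving $P_1$ the win for every $n \geq 1$.

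I expect the main obstacle to be the parity bookkeeping at $\emptyset$ — specifically, justifying cleanly that the off-$\emptyset$ and onto-$\emptyset$ crossings strictly alternate and are thus equinumerous precisely when the token rests on $\emptyset$ — because this is exactly the step where the odd dimension $2n+1$ (as opposed to an even dimension, where $\deg(\emptyset)$ would be even) is genuinely used. The confinement to levels $\emptyset, 1, 2$ and the bipartite observation are comparatively routine once Lemma~\ref{lem:levels012} is in hand.
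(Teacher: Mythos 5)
Your proposal is correct and takes essentially the same approach as the paper: confine play to levels $\emptyset$, $1$, and $2$ via Lemma~\ref{lem:levels012}, then use a parity argument plus finiteness to conclude that $P_1$ always has a move and $P_2$ must eventually be stranded. Your alternation count of departures and arrivals at $\emptyset$ is simply a more explicit justification of the paper's claim that $P_1$ always moves from a vertex of odd remaining degree, and your level-$2$ case is exactly the content of the Lemma, so the two proofs differ only in bookkeeping.
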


\begin{proof}
Assume $\omega(e) = 1$ for all $e \in E(Q_{2n+1})$, that $n \in \mathbb{Z}$, and $n \geq 1$.  Label the digits according to $X_a$ and the level labeling scheme.  Start with $\Delta$ on $\emptyset$.

Since all hypercubes are bipartite, we know that $P_1$'s vertices have even parity, and $P_2$ vertices have odd parity.  By Lemma~\ref{lem:levels012}, $P_1$ can keep $P_2$ within the confines of levels $\emptyset, 1,$ and $2$.  Because of this, consider only these three levels.  In essence, ``chop off" levels 3 through $2n+1$.

With $P_1$ at $\emptyset$ at the start, notice that the vertices in level $\emptyset$ and 1 are all odd degree.  Since we are considering the graph without levels 3 through $2n+1$, the vertices in level 2 are all of degree 2.  Also, since we are assuming each edge has unit weight, when a player moves across an edge, it is deleted.  Thus $P_1$ starts on an odd degree vertex and $P_2$ starts on an even degree vertex at each of their respective moves.  This implies that $P_1$ always has an edge to move away from at any vertex (since odd degree implies at least degree 1).  However, since $Q_{2n+1}$ is finite, eventually $P_2$ will come to a vertex of degree 0 and not be able to move.

Thus $P_1$ always wins the $Q_{2n+1}$ for any positive integer value of $n$.
\end{proof}

\begin{Theorem}

Assume that $\omega(e)=1$.  Then $P_2$ wins the $Q_{2n}$ for all $n \geq 1$.

\end{Theorem}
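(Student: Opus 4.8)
The plan is to avoid exhibiting an explicit winning strategy for $P_2$ and instead prove the stronger statement that $P_2$ wins no matter how either player moves, by a parity (handshaking) argument on the degrees of $Q_{2n}$. The basic structural observation is that, because every edge has unit weight, each edge is crossed at most once; hence at any moment during play the set of already-used edges forms a \emph{trail} (a walk with no repeated edge) that begins at $\emptyset$ and ends at the current location of $\Delta$. I would record this as the starting point and then analyze exactly when a player can fail to move.

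First I would set up the handshaking count for such a trail. For a trail running from $\emptyset$ to the current vertex $t$, the number of already-used edges incident to a vertex $w$ is even when $w$ is an interior vertex of the trail and is odd precisely at the two endpoints $\emptyset$ and $t$ (all counts are even in the degenerate case $t=\emptyset$); this is the standard in/out bookkeeping for a walk without repeated edges. The crucial input from the hypercube is that $Q_{2n}$ is regular of the \emph{even} degree $2n$. A player is stuck at $t$ exactly when all $2n$ edges at $t$ have been used, that is, when the used-degree of $t$ equals $2n$, which is even.

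Combining these gives the crux step. Whenever $\Delta$ rests on an odd-parity vertex $t$, that $t$ is the current endpoint of the trail and $t\neq\emptyset$, so its used-degree is odd, hence at most $2n-1<2n$; thus $P_2$, who always moves from odd-parity vertices, can never be stuck. On the other hand, a player is stuck at $t$ only if the used-degree of $t$ equals the full degree $2n$, an even number, which by the trail count can occur at the current endpoint only when $t=\emptyset$. Since the game is finite (at most $|E(Q_{2n})|$ moves), it must terminate, and by the preceding remarks the only vertex at which it can terminate is $\emptyset$.

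Finally I would settle the parity of turns. Since $Q_{2n}$ is bipartite with $\emptyset$ on the even side and parity alternates with each move starting from $P_1$ at $\emptyset$, the token occupies an even-parity vertex exactly after an even number of moves, which is precisely when it is $P_1$'s turn; in particular $\Delta$ sits on $\emptyset$ only on $P_1$'s turns. Hence the forced terminal position leaves $P_1$ unable to move, so $P_2$ wins — and in fact wins independently of the choices of either player. The step I expect to require the most care is applying the handshaking count at \emph{every} intermediate position, together with the clean identification ``stuck'' $\Leftrightarrow$ ``used-degree equals the full (even) degree.'' This is exactly where the evenness of $2n$ is indispensable, dualizing the odd-degree reasoning behind Theorem~\ref{Thm:P1winsQ_{2n+1}}, where the odd degree $2n+1$ instead permits the game to get stuck away from $\emptyset$.
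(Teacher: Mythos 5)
Your proposal is correct and takes essentially the same route as the paper: both arguments rest on the observation that, by degree parity, every vertex other than $\emptyset$ has odd remaining (equivalently, odd used) degree at the moment a player must move from it, so the finite game can only terminate at $\emptyset$, which by bipartiteness is always $P_1$'s turn. Your trail/handshaking count is just a more rigorous formulation of the paper's informal claim that ``the degree lowers by one each time a player arrives at the vertex,'' and like the paper's proof it shows $P_2$ wins regardless of either player's choices.
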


\begin{proof}
Assume that $n \in \mathbb{Z}$, $n \geq 1$, and $\omega(e) = 1$ for all $e \in E(Q_{2n})$.  Label the vertices according to $X_a$ and the level labeling scheme.  Start with $\Delta$ on $\emptyset$.

Note that $Q_{2n}$ is regular of degree 2n, and $Q_{2n}$ is bipartite.  Thus $P_1$ moves from vertices with even parity, and $P_2$ moves from vertices with odd parity.  Also notice that $P_1$ starts from a vertex of even degree, and each time $P_1$ moves from $\emptyset$ it is of even degree.  Each other vertex is of odd degree when either player moves from it.  This is because the degree lowers by one each time a player arrives at the vertex.  Thus on the first move, $P_1$ moves from an even degree vertex to what was an even degree vertex.  Since the process of moving to a vertex lowers the degree by one each time because of unit weight of the edges, $P_2$ starts from a vertex that has odd degree.  This is true for each player at each vertex except for $P_1$ at vertex $\emptyset$.


If a vertex has odd degree when moving from it, a player is guaranteed to be able to move away from the vertex, since an odd degree vertex implies that the degree is at least 1.  Thus the only vertex that a player could possibly get stuck at is the $\emptyset$ vertex.  Since $P_1$ is the only player to move from $\emptyset$ by virtue of $Q_{2n}$ being bipartite, $P_1$ is the only player who is able to lose.

Therefore, since there are only a finite number of moves, $P_2$ wins the $Q_{2n}$ for any positive integer value of $n$.
\end{proof}

With the previous two theorems, we can formulate the following two corollaries.

\begin{Corollary}
$P_1$ wins the unit weight hypercube if and only if $n$ is odd.
\end{Corollary}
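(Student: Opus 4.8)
The plan is to obtain this corollary directly from the two preceding theorems by a parity case analysis on the dimension $n$. The only real bookkeeping is to reconcile the indexing: Theorem~\ref{Thm:P1winsQ_{2n+1}} is stated for cubes of odd dimension $2n+1$, and the theorem immediately following it for cubes of even dimension $2n$. So I first read ``$n$'' in the corollary as the ambient dimension of $Q_n$, distinct from the running indices used in those two theorems.

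First I would handle the direction ``$n$ odd $\Rightarrow P_1$ wins.'' If $n$ is odd, write $n = 2m+1$ with $m \geq 0$, so that $Q_n = Q_{2m+1}$; applying Theorem~\ref{Thm:P1winsQ_{2n+1}} with its index equal to $m$ gives that $P_1$ wins.

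For the converse I would argue by contraposition. If $n$ is even, write $n = 2m$ with $m \geq 1$, so $Q_n = Q_{2m}$, and the second theorem gives that $P_2$ wins; hence $P_1$ does not win. Equivalently, if $P_1$ wins then $n$ is not even, so $n$ is odd. Together the two cases give the biconditional.

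I do not expect any genuine obstacle, since every game on a finite unit-weight hypercube terminates and admits no draw, so exactly one of $P_1, P_2$ wins, and the two theorems between them cover every $n \geq 1$. The single point meriting a check is the boundary of the indexing: the odd branch includes $n=1$ (via $m=0$) and the even branch includes $n=2$ (via $m=1$), and both lie within the ranges $n \geq 1$ assumed in the two theorems, so the odd/even dichotomy exhausts all $n \geq 1$ with no gap or overlap.
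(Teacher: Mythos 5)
Your derivation is essentially the one the paper intends: the corollary appears immediately after the two theorems with no separate proof, so the intended argument is exactly your odd/even split together with the (correct and worth stating) observation that a finite unit-weight game has no draws, so exactly one player wins. However, the one point you yourself flagged as ``the single point meriting a check'' is where the slip occurs. Theorem~\ref{Thm:P1winsQ_{2n+1}} is stated only for index $n \geq 1$, that is, for dimensions $2n+1 \geq 3$; your odd branch with $m = 0$ (the cube $Q_1$) is therefore \emph{not} covered by the theorem as literally stated, contrary to your claim that both boundary cases lie within the theorems' ranges. The gap is trivial to close: $Q_1 = K_2$, so $P_1$ moves across the unique edge and $P_2$ is stranded, whence $P_1$ wins; alternatively, the parity proof of the theorem applies verbatim when the index is $0$. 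You should insert that one-line case (or remark that the theorem's hypothesis could be weakened to $n \geq 0$) rather than asserting a range inclusion that fails. The even branch is clean: $n = 2m$ with $m \geq 1$ matches the second theorem's hypothesis exactly.
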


\begin{Corollary}
$P_2$ wins the unit weight hypercube if and only if $n$ is even.
\end{Corollary}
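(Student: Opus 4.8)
The plan is to obtain this statement directly from the two theorems proved above, together with the single observation that a game of Nim on a finite unit-weight graph can never end in a draw. Indeed, this corollary is the logical complement of the immediately preceding one, so once we know that exactly one of the two players has a winning strategy, the biconditional ``$P_2$ wins $\iff n$ even'' is forced by the already-established biconditional for $P_1$. First I would record the no-draw fact: on any hypercube $Q_m$ every edge has weight $1$ and each move deletes exactly one edge, so play terminates after at most $|E(Q_m)|$ moves; since the player who cannot move loses and the game is finite and impartial, one and only one player possesses a winning strategy. Hence the assertions ``$P_1$ wins'' and ``$P_2$ wins'' are mutually exclusive and exhaustive.

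With that in hand I would split on the parity of the dimension. If $n$ is even, write the cube as $Q_{2k}$ and invoke the theorem establishing that $P_2$ wins every $Q_{2k}$, giving one direction immediately. For the converse, suppose $P_2$ wins. Then $P_1$ does not win, so by Theorem~\ref{Thm:P1winsQ_{2n+1}} the dimension cannot be odd of the form $2k+1$; since every dimension is either even or odd, $n$ must be even. Combining the two implications yields the stated equivalence, and the same two theorems, read the other way, simultaneously reprove the preceding corollary for $P_1$.

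The closest thing to an obstacle here is the no-draw claim, since without it the two one-directional theorems would not combine into a biconditional: one must rule out the a priori possibility that neither player has a forced win. This is genuinely immediate from finiteness and the last-player-wins (normal-play) convention, so no real difficulty arises, but I would state it explicitly rather than leave it implicit. Beyond that, the argument is purely a parity bookkeeping step layered on top of the two theorems, and involves no new game analysis of the hypercube itself.
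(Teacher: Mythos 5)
Your proposal is correct and takes essentially the same route the paper intends: the paper offers no separate argument for this corollary, presenting it (together with its companion for $P_1$) as an immediate consequence of the two preceding theorems, which is precisely the deduction you carry out. Your explicit statement of the no-draw fact (finiteness plus the normal-play convention implies exactly one player has a winning strategy) simply makes rigorous a step the paper leaves implicit.
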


\section{A note about the hypercube with arbitrary weight}

The unit weight hypercube had a nice parity argument to show the winner.  Unfortunately, the hypercube weighted arbitrarily is not so easy to solve.  We know very quickly that weight matters with the arbitrarily weighted hypercube.  Take for a simple example, $Q_2 = C_4$.  By previous work in the even cycle section, we know that the winner of the game is decided by the distances to the lowest weight edge.  Hence we can tell at least for the even values of $n$ that the weight of the $Q_n$ will matter in determining the winner of the game.

\bibliographystyle{plain}
\bibliography{AllReferences}

\end{document}